\newtheorem{lemma}{Lemma}[section]
\newtheorem{prop}[lemma]{Proposition}
\newtheorem{cor}[lemma]{Corollary}
\newtheorem{conj}[lemma]{Conjecture}
\newtheorem{claim*}{Claim}
\newtheorem{problem}[lemma]{Problem}
\newtheorem{notation}[lemma]{Notation}
\newtheorem{question}[lemma]{Question}
\theoremstyle{definition}
\theoremstyle{remark}
\newtheorem{example}[lemma]{Example}
\newtheorem{remark}[lemma]{Remark}
\newcommand{\Tor}{\operatorname{Tor}}
\newcommand{\kk}{\Bbbk}
\newcommand{\coker}{\operatorname{coker}}
\newcommand{\codim}{\operatorname{codim}}
\newcommand{\PP}{\mathbb{P}}
\newcommand{\ZZ}{\mathbb{Z}}
\newcommand{\QQ}{\mathbb{Q}}
\newcommand{\cO}{\mathcal{O}}
\newcommand{\cE}{\mathcal{E}}
\newcommand{\cF}{\mathcal{F}}
\newcommand{\fH}{\mathfrak{H}}
\newcommand{\fg}{\mathfrak{g}}
\newcommand{\gl}{\mathfrak{gl}}
\newcommand{\rU}{\mathrm{U}}
\newcommand{\rH}{\mathrm{H}}
\newcommand{\FF}{\mathbf{F}}
\newcommand{\Sym}{\operatorname{Sym}} 
\newcommand{\GL}{{GL}}
\newcommand{\defi}[1]{\textsf{#1}} 
\newcommand{\arXiv}[1]{{\tt \href{http://arxiv.org/abs/#1}{arXiv:#1}}}
\newcommand{\BQ}{\mathrm{B}}
\newcommand{\DD}{\mathrm{D}}
\newcommand{\CvbQ}{\mathrm{C}_{\text{vb}}}
\def\BS{Boij--S\"oderberg~}
\title{Questions about Boij--S\"oderberg theory}
\date{June 6, 2016}
\author{Daniel Erman}
\address{Department of Mathematics, University of Wisconsin, Madison, WI 53706, USA}
\email{derman@math.wisc.edu}
\urladdr{\url{http://www.math.wisc.edu/~derman/}}
\author{Steven V Sam}
\address{Department of Mathematics, University of Wisconsin, Madison, WI 53706, USA}
\email{svs@math.wisc.edu}
\urladdr{\url{http://www.math.wisc.edu/~svs/}}
\thanks{DE was partially supported by NSF DMS-1302057 and  SS was partially supported by NSF DMS-1500069.}
\subjclass[2010]{%
13D02
}
\begin{document}

\maketitle

\section{Background on Boij--S\"oderberg Theory}~\label{subsec:background BS}
\defi{Boij--S\"oderberg theory} focuses on the properties and duality relationship between two types of numerical invariants.  One side involves the Betti table of a graded free resolution over the polynomial ring.  The other side involves the cohomology table of a coherent sheaf on projective space.  The theory began with a conjectural description of the cone of Betti tables of finite length modules, given in~\cite{boij-sod1}.  Those conjectures were proven in~\cite{eis-schrey1}, which also described the cone of cohomology tables of vector bundles and illustrated a sort of duality between Betti tables and cohomology tables.

The theory itself has since expanded in many directions: allowing modules whose support has higher dimension, replacing vector bundles by coherent sheaves, working over rings other than the polynomial ring, and so on.  But at its core, Boij--S\"oderberg theory involves:
\begin{enumerate}
	\item  A classification, up to scalar multiple, of the possible Betti tables of some class of objects (for example, free resolutions of finitely generated modules of dimension $\leq c$).
	\item  A classification, up to scalar multiple, of the cohomology tables of some class of objects (for examples, coherent sheaves of dimension $\leq n-c$).
	\item  Intersection theory-style duality results between Betti tables and cohomology tables.
\end{enumerate}

One motivation behind Boij and S\"oderberg's conjectures was the observation that it would yield an immediate proof of the Cohen--Macaulay version of the Multiplicity Conjectures of Herzog--Huneke--Srinivasan~\cite{herzog-srinivasan}.  Eisenbud and Schreyer's~\cite{eis-schrey1} thus yielded an immediate proof of that conjecture, and the subsequent papers~\cite{boij-sod2,eis-schrey2} provided a proof of the Multiplicity Conjecture for non-Cohen--Macaulay modules.  Other applications of the theory involve Horrocks' Conjecture~\cite{erman-beh}, cohomology of tensor products of vector bundles~\cite{eis-schrey-banks}, sparse determinantal ideals~\cite{boocher}, concavity of Betti tables \cite{mccullough-concavity} and more.  In fact, \BS theory has grown into an active area of research in commutative algebra and algebraic geometry.  See \S\ref{sec:more topics} or \cite{floystad-expository, survey2} for a summary of many of the related papers.  In addition, many features from the theory have been implemented via Macaulay2 packages such as \verb BoijSoederberg.m2,  \verb BGG.m2,  and \verb TensorComplexes.m2 ~\cite{M2}. 

In this paper, we focus on discussing several open questions related to Boij--S\"oderberg theory. Of course, the choice of topics reflects our own bias and perspective. We also briefly review some of the major aspects of the theory, but those interested in a fuller expository treatment should refer to~\cite{eis-schrey-icm} or ~\cite{floystad-expository}.

\subsection{Betti tables}
Let $S=\kk[x_0, \dots, x_n]$ with the grading $\deg(x_i)=1$ for all $i$, and with $\kk$ any field.  Let $M$ be a finitely generated graded module over $S$.  Since $M$ is graded, it admits a minimal free resolution $\FF=[F_0\gets F_1 \gets \dots \gets F_p\gets 0]$.  The \defi{Betti table} of $M$ is a vector whose coordinates $\beta_{i,j}(M)$ encode the numerical data of the minimal free resolution of $M$.  Namely, since each $F_i$ is a graded free module, we can let $\beta_{i,j}(M)$ be the number of degree $j$ generators of $F_i$; equivalently, we can write $F_i = \bigoplus_{j\in \ZZ} S(-j)^{\beta_{i,j}}$; also equivalently, the Betti numbers come from graded Tor groups with respect to the residue field: $\beta_{i,j}(M):=\dim \Tor_i(\FF,\kk)_j$.  

For example, if $S=\kk[x_0,x_1]$ and $M=S/(x_0,x_1^2)$ then the minimal free resolution of $M$ is a Koszul complex
\[
F = S \longleftarrow \begin{matrix}S^1(-1)\\ \oplus\\ S^1(-2) \end{matrix} \longleftarrow S^1(-3)\longleftarrow 0.
\]
The Betti table of $M$ is traditionally displayed as the following array or matrix:
\[
\beta(M) = \begin{bmatrix}
\beta_{0,0}&\beta_{1,1}&\dots&\beta_{p,p}\\
\beta_{0,1}&\beta_{1,2}&\dots &\beta_{p,p+1}\\
\vdots & \ddots &&\vdots 
\end{bmatrix}.
\]
In the example above, we thus have
\[
\beta(M) = \begin{bmatrix}
1&1&-\\
-&1&1
\end{bmatrix}.
\]

There is a huge literature on the properties of Betti tables, and we refer the reader to \cite{eis-syzygy} as a starting point. Yet there are also many fundamental open questions about Betti tables. The most notable question is Horrocks' Conjecture, due to Horrocks~\cite[Problem 24]{hartshorne-vector} and Buchsbaum-Eisenbud \cite[p. 453]{buchs-eis-gor}, which proposes that the Koszul complex is the ``smallest'' free resolution.  More precisely, one version of the conjecture proposes that if $c=\codim M$, then $\sum_{i,j} \beta_{i,j}(M)\geq 2^c$.  The conjecture is known in five variables and other special cases~\cite{avramov-buchweitz-betti,chara-evans-problems} but remains wide open in general.

Boij and S\"oderberg proposed taking the convex cone spanned by the Betti tables, and then focusing on this cone.  This is like studying Betti tables ``up to scalar multiple'', which would remove the subtleties behind questions like Horrocks' Conjecture. Note that convex combinations are natural in this context, as $\beta(M\oplus M')=\beta(M)+\beta(M')$.  We define $B^c(S)$ as the convex cone spanned by the Betti tables of all $S$-modules of codimension $\geq c$:
\[
B^c(S) := \QQ_{\geq 0}\{ \beta(M) \mid \codim M \geq c\} \subseteq \bigoplus_{i=0}^{n+1} \bigoplus_{j\in \ZZ}\QQ.
\]

Boij and S\"oderberg's original conjectures described the cone $B^{n+1}(S)$, which is the case of finite length $S$-modules~\cite{boij-sod1}.  This description was based on the notion of a \defi{pure resolution of type $d=(d_0,\dots,d_{n+1})\in \ZZ^{n+2}$}, which is an acyclic complex where the $i$'th term is generated entirely in degree $d_i$; in other words, it is a minimal free complex of the form:
\[
S(-d_0)^{\beta_{0,d_0}}\longleftarrow S(-d_1)^{\beta_{1,d_1}}\longleftarrow S(-d_2)^{\beta_{2,d_2}}\longleftarrow \dots \longleftarrow S(-d_p)^{\beta_{n+1,d_{n+1}}}\longleftarrow 0.
\]
Any such resolution must satisfy $d_0<d_1<\dots<d_{n+1}$, and Boij and S\"oderberg conjectured that for any strictly increasing vector $(d_0,\dots,d_{n+1})$ there was such a pure resolution.  It was known that if such a resolution existed, then the vector $d$ determined a unique ray in $B^{n+1}(S)$~\cite[\S 2.1]{boij-sod1}, and Boij and S\"oderberg conjectured that these were precisely the extremal rays of $B^{n+1}(S)$.  So the extremal rays of $B^{n+1}(S)$ should be in bijection with strictly increasing vectors (sometimes called \defi{degree sequences}) $d=(d_0,\dots,d_{n+1})$.  With the advent of hindsight, this is a natural guess: pure resolutions will give the Betti tables with the fewest possible nonzero entries, and so they will always produce extremal rays.

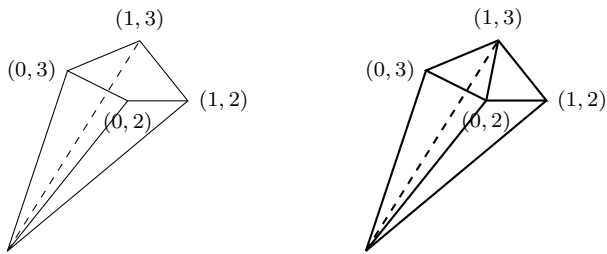
\begin{figure}
\begin{tikzpicture}[scale=0.8]
\draw[-](0,0)--(1,3);
\draw[left] (1,3) node {{\tiny $(0,3)$}};
\draw[-](0,0)--(2,2.5);
\draw[below] (2,2.5) node {{\tiny $(0,2)$}};
\draw[-](0,0)--(3,2.5);
\draw[right] (3,2.5) node {{\tiny $(1,2)$}};
\draw[dashed,-](0,0)--(2.2,3.5);
\draw[above] (2.2,3.5) node {{\tiny $(1,3)$}};
\draw[-](1,3)--(2,2.5)--(3,2.5)--(2.2,3.5)--cycle;
\end{tikzpicture}
\hspace{1cm}
\begin{tikzpicture}[scale=0.8]
\draw[-,thick](0,0)--(1,3);
\draw[left] (1,3) node {{\tiny $(0,3)$}};
\draw[-,thick](0,0)--(2,2.5);
\draw[below] (2,2.5) node {{\tiny $(0,2)$}};
\draw[-,thick](0,0)--(3,2.5);
\draw[right] (3,2.5) node {{\tiny $(1,2)$}};
\draw[dashed,-,thick](0,0)--(2.2,3.5);
\draw[-,thick](2,2.5)--(2.2,3.5);
\draw[above] (2.2,3.5) node {{\tiny $(1,3)$}};
\draw[-,thick](1,3)--(2,2.5)--(3,2.5)--(2.2,3.5)--cycle;
\end{tikzpicture}
\caption{\footnotesize For a cone of Betti tables, the extremal rays come from Cohen--Macaulay modules with pure resolutions.  The rays may thus be labelled by strictly increasing sequences of integers, as illustrated on the left.  The cone has a simplicial fan structure, where simplices correspond to increasing chains of integers with respect to the partial order, as illustrated on the right.}
\label{fig:cones}
\end{figure}

There is a natural termwise partial order on such vectors, where $d\leq d'$ if $d_i\leq d_i'$ for all $i$, and Boij and S\"oderberg also conjectured that this partial order endowed $B^{n+1}(S)$ with the structure of a simplicial fan, where $B^{n+1}(S)$ is the union of simplicial cones and the simplices correspond to maximal chains of degree sequences with respect to the partial order.  For instance, the cone in Figure~\ref{fig:cones} decomposes as the union of two simplicial cones; the two cones correspond to the chains $(0,2)<(1,2)<(1,3)$ and $(0,2)<(0,3)<(1,3)$.  

The existence of pure resolutions  was first proven in~\cite{efw} in characteristic zero and in~\cite{eis-schrey1} in arbitrary characteristic; further generalizations appear in~\cite{beks-tensor,floystad-triplets}.  The rest of Boij and S\"oderberg's conjectures were proven in~\cite{eis-schrey1} and the theory has since been extended from finite length modules to finitely generated modules ~\cites{boij-sod2} and to bounded complexes of modules~\cite{eis-erman-cat}.

One of the most striking corollaries of Boij--S\"oderberg theory is the resulting decomposition of Betti tables.  The simplicial structure of the cone of Betti tables provides an algorithm for writing a Betti table $\beta(M)$ as a positive, rational sum of the Betti tables of pure resolutions.  For instance, returning to our example of $M=\kk[x,y]/(x,y^2)$, we have:
\begin{equation}\label{eqn:xy2}
\beta(M) = \begin{bmatrix}
1&1&-\\
-&1&1
\end{bmatrix} =
\frac{1}{3} \begin{bmatrix}
2&3&-\\
-&-&1
\end{bmatrix}
+
\frac{1}{3} \begin{bmatrix}
1&-&-\\
-&3&2
\end{bmatrix}
\end{equation}
The decomposition of $\beta(M)$ will always be a finite sum, which is an immediate consequence of the fact that every Betti table has only finitely many nonzero entries, and there are thus a finite number of extremal rays which could potentially contribute to that Betti table.  This decomposition algorithm is implemented in Macaulay2, and it is extremely efficient.

These decompositions provide a counterintuitive aspect of Boij--S\"oderberg theory. 

\begin{example}\label{ex:xy2}
Let $S=\kk[x,y]$ and $M=S/(x,y^2)$.
The pure resolutions appearing on the right-hand side of \eqref{eqn:xy2} are resolutions of actual modules.  If we let $M'=S/(x,y)^2$ and $M'':=\coker \begin{pmatrix}x&y&0\\0&x&y\end{pmatrix}$ then we have:
\[
\beta(M) = \tfrac{1}{3}\beta(M')+\tfrac{1}{3}\beta(M'').
\]
Yet the above equation is purely numerical: Boij and S\"oderberg did not address whether the free resolution of $M$ could be built from the free resolutions of $M'$ and $M''$, and Eisenbud and Schreyer's proof provides no such categorification. We discuss this in more detail in \S\ref{sec:categorification}.
\end{example}

If we study finitely generated modules which are not necessarily of finite length, then the description of the cone of Betti tables is a bit more subtle.  For the cone $B^c(S)$ of Betti tables of modules of modules of codimension $\geq c$, the extremal rays still correspond to pure resolutions of Cohen--Macaulay modules, but now the modules may have codimension between $c$ and $n+1$.  For instance, if we consider the non-Cohen--Macaulay, cyclic module $M=\kk[x,y,z]/(x^2, xy, xz, yz)$, then the Boij--S\"oderberg decomposition will be:
\[
\beta(M) = \begin{bmatrix}
1&-&-&-\\
-&4&4&1
\end{bmatrix} =
\frac{1}{3} \begin{bmatrix}
1&-&-&-\\
-&6&8&3
\end{bmatrix}
+
\frac{2}{3} \begin{bmatrix}
1&-&-\\
-&3&2
\end{bmatrix}.
\]
The Betti tables on the right can be realized as the Betti tables of $S/(x_1,x_2,x_3)^2$ and $S/(x_1,x_2)^2$, which are Cohen--Macaulay modules of codimension $3$ and $2$, respectively.  We denote these as pure resolutions of type $(0,2,3,4)$ and type $(0,2,3,\infty)$ respectively, because under this convention, the termwise partial order still induces a simplicial fan structure on $B^c(S)$.

Even more generally, we could leave the world of modules and look instead at bounded complexes of free modules or equivalently at elements of $\DD^b(S)$.  The corresponding cone is yet again spanned by pure resolutions of Cohen--Macaulay modules, but where we also allow homological shifts.  With the right conventions, the whole story carries over in great generality in this context, including the decomposition theorem.  For instance, on $S=\kk[x,y]$ we could take the complex
\[
\FF := \left[S^1\overset{\left(\begin{smallmatrix}x&y\end{smallmatrix}\right)}{\xlongleftarrow{\hspace*{1.1cm}}} S^2(-1)\overset{\left(\begin{smallmatrix}-y^2&xy\\xy&-x^2\end{smallmatrix}\right)}{\xlongleftarrow{\hspace*{1.1cm}}} S^2(-3)\overset{\left(\begin{smallmatrix}y\\x\end{smallmatrix}\right)}{\xlongleftarrow{\hspace*{1.1cm}}} S^1(-4)\right],
\]
which has finite length homology $H_{0}\FF = \kk,\ H_{1}\FF = \kk(-2)$.  We then have the decomposition:
\[
\beta(\FF)=\begin{bmatrix} 1&2&-&-\\-&-&2&1\end{bmatrix}=
\frac{1}{2}\begin{bmatrix}
-&1&-&-\\
-&-&3&2
\end{bmatrix}
+\frac{1}{2}
\begin{bmatrix}
2&3&-&-\\
-&-&1&-
\end{bmatrix}.
\]
See~\cite{eis-erman-cat} for more on Boij--S\"oderberg theory for complexes.

\subsection{Cohomology tables}
On the sheaf cohomology side, we fix a coherent sheaf $\cE$ on $\PP^n_{\kk}$.  We define the \defi{cohomology table} of 
$\cE$ as a vector whose coordinates $\gamma_{i,j}(\cE)$ encode graded sheaf cohomology groups of $\cE$.  In particular, $\gamma_{i,j}(\cE) := \dim_{\kk} \rH^{i}(\PP^n,\cE(j))$.

When displaying cohomology tables, we follow the tradition introduced in~\cite{eis-floy-schrey} and write:
\[
\gamma(\cE) = \begin{matrix}
\dots & \gamma_{n,-n-2}&\gamma_{n,-n-1}&\gamma_{n,-n}&\gamma_{n,-n+1}&\dots&\\
\dots & \gamma_{n-1,-n-1}&\gamma_{n-1,-n}&\gamma_{n-1,-n+1}&\gamma_{n-1,-n+2}&\dots&\\
&  &\ddots && \\
\dots & \gamma_{1,-3}&\gamma_{1,-2}&\gamma_{1,-1}&\gamma_{1,0}&\dots&\\
\dots & \gamma_{0,-2}&\gamma_{0,-1}&\gamma_{0,0}&\gamma_{0,1}&\dots&\\
\end{matrix}
\]
Thus for instance, if $\cE=\cO_{\PP^2}$ then we have:
\[
\gamma(\cO_{\PP^2}) = 
\begin{matrix}
\dots & 3&1&-&-&-&-&\dots&\\
\dots & -&-&-&-&-&-&\dots&\\
\dots &-&-&1&3&6&10&\dots&\\
\end{matrix}
\]
where the $3$ on the bottom row corresponds to $\gamma_{0,1}(\cO_{\PP^2}(1))=\dim \rH^0(\PP^2,\cO_{\PP^2}(1))=3$.  

While sheaf cohomology is central to all of modern algebraic geometry, the research on cohomology tables is nowhere near as extensive as the work on Betti tables.  In fact, cohomology tables seem to have first appeared in~\cite{eis-floy-schrey}, where the cohomology table of a sheaf $\cF$ is written as the Betti table of the corresponding Tate resolution over the exterior algebra.  

As with Betti tables, there are many open questions related to cohomology tables.  One notable question is whether there exists a non-split rank $2$ vector bundle on $\PP^n$ for any $n\geq 5$.  This question could be answered entirely with information in the cohomology table.   A coherent sheaf on $\PP^n$ is a vector bundle if and only if there are only finitely many zero entries in the cohomology table in the rows corresponding to $\rH^i$ where $i=1,2,\dots,n-1$; and it is a sum of line bundles if and only if all of those intermediate entries are zero.  And since the cohomology table is a refinement of the Hilbert polynomial, the rank of the vector bundle can be read off from the cohomology table as well.

Inspired by the Boij--S\"oderberg conjectures, Eisenbud and Schreyer introduced the convex cone spanned by the cohomology tables.  We start by focusing on vector bundles; define $\CvbQ(\PP^n)$ as the convex cone spanned by the cohomology tables of all vector bundles on $\PP^n$:
\[
\CvbQ(\PP^n) := \QQ_{\geq 0}\{ \gamma(\cE) \mid \cE \text{ is a vector bundle on $\PP^n$}\} \subseteq \prod_{i=0}^{n+1} \prod_{j\in \ZZ}\QQ.
\]

Eisenbud and Schreyer give a complete description of this cone in~\cite{eis-schrey1}.  A \defi{supernatural bundle}  is a vector bundle with as few nonzero cohomology groups as possible; more precisely, $\cE$ is supernatural of type $f=(f_1,\dots,f_{n})\in \ZZ^{n}$ if
\[
\rH^i(\PP^n,\cE(j))\ne 0 \iff \begin{cases}
i=n \text{ and } j<f_n\\
1\leq i \leq n-1 \text{ and } f_{i+1}<j<f_{i}\\
i=0 \text{ and } j>f_1.
\end{cases}
\]
Any such bundle must satisfy $f_1>f_2>\dots>f_n$ and such a strictly decreasing sequence of integers is called a \defi{root sequence}.  Eisenbud and Schreyer proved that there exists a supernatural bundle with any specified root sequence; as was the case with pure resolutions, the vector $f$ thus determines a unique ray in $\CvbQ(\PP^n)$, and Eisenbud and Schreyer proved that these were precisely the extremal rays of that cone.  Again with the advent of hindsight, this is a natural guess: supernatural bundles give the cohomology tables with the fewest possible nonzero entries, and so they will always produce extremal rays.

The termwise partial order on root sequences then induces a simplicial fan structure on $\CvbQ(\PP^n)$ in a manner entirely parallel to the story for the cone of Betti tables.
\begin{example}\label{ex:PP2 example}
Let $\cE$ be the cokernel of a generic map $\phi \colon \cO_{\PP^2}(-2)^2\to \cO_{\PP^2}(-1)^5$.  Then $\cE$ is a rank $3$ bundle on $\PP^2$.  A computation in Macaulay2 yields the decomposition
\begin{align*}
\gamma(\cE) &= \begin{bmatrix}
\dots & 20&10&3&-&-&-&-&\dots\\
\dots & -&-&1&2&-&-&-&\dots\\
\dots &-&-&-&-&5&13&24&\dots\\
\end{bmatrix}\\
&=
\frac{1}{3}\begin{bmatrix}
\dots & 15&6&-&-&-&-&-&\dots\\
\dots & -&-&3&3&-&-&-&\dots\\
\dots &-&-&-&-&6&15&27&\dots\\
\end{bmatrix}
+
\begin{bmatrix}
\dots & 15&8&3&-&-&-&-&\dots\\
\dots & -&-&-&1&-&-&-&\dots\\
\dots &-&-&-&-&3&8&15&\dots\\
\end{bmatrix},
\\
\intertext{where the tables used in the decomposition can be realized as cohomology tables of supernatural bundles.  In fact, if $\cE':=(\Sym_2\Omega^1_{\PP^2})(2)$ and $\cE'':=\Omega^1_{\PP^2}(1)$, then we have}
&= \tfrac{1}{3}\gamma(\cE')+\tfrac{1}{3} \gamma(\cE'').
\end{align*}
We return to this example in \S\ref{sec:categorification}.
\end{example}

All of the results about vector bundles can be extended to coherent sheaves as well.  Let $C(\PP^n)$ be the cone of cohomology tables of coherent sheaves $\cF$  on $\PP^n$.  In~\cite{eis-schrey2}, Eisenbud and Schreyer describe this cone in detail. Every extremal ray comes from a sheaf $\cE$, which is supported on a linear subspace $\PP^a\subseteq \PP^n$ for some $a\leq n$, and where $\cE$ is a supernatural vector bundle on $\PP^a$.  As in the case of vector bundles, the extremal rays thus correspond to root sequences, and an appropriate partial order induces a simplicial fan structure on $C(\PP^n)$. But there is one key difference for coherent sheaves: the decomposition might involve an infinite number of summands~\cite[Example~0.3]{eis-schrey2}.  There are open questions even in this case.
\begin{question}
Under what conditions will an infinite sum of supernatural sheaves correspond to a cohomology table, at least up to scalar multiple?
\end{question}

\section{Categorification}\label{sec:categorification}
The most natural and mysterious question raised by \BS theory has to do with categorifying the decompositions into pure/supernatural tables.
\begin{question}
\begin{enumerate}[\rm \indent (1)]
\item  For a graded module $M$, does the decomposition of $\beta(M)$ into pure Betti tables ``lift'' to a corresponding decomposition of  the module $M$ itself?  
\item For a vector bundle $\cE$, does the decomposition of $\gamma(\cE)$ into supernatural cohomology tables ``lift'' to a corresponding decomposition of the bundle $\cE$ itself?
\end{enumerate}
\end{question}
Let us return to Examples~\ref{ex:xy2} and \ref{ex:PP2 example}, and continue with that notation.  On the module side, we had the decomposition
\[
\beta(M) = \tfrac{1}{3} \beta(M')+ \tfrac{1}{3}\beta(M'')
\]
and on the vector bundle side we had
\begin{align*}
\gamma(\cE) &= \tfrac{1}{3}\gamma \left(\cE'\right)+\gamma(\cE'').
\end{align*}
If we want to decompose $M$ or $\cE$ into pieces corresponding to the Boij--S\"oderberg decomposition, then there are three challenges:
\begin{enumerate}
	\item {\bf Denominators:}  Is there a natural way to systematically clear the denominators that arise in these sorts of decompositions? 
	\item  {\bf Moduli:} Modules with pure resolutions and supernatural bundles can have nontrivial moduli.  Namely, there can be a lot of modules/vector bundles with the same pure Betti table/supernatural cohomology table, or even the same table up to scalar multiple.\footnote{For example, if $\phi$ is any sufficiently general $2\times 4$ matrix of linear forms on $S=\kk[x,y,z]$, then the cokernel of $\phi$ will define a pure resolution with degree sequence $(0,1,3,4)$.  Varying $\phi$ produces non-isomorphic pure resolutions.  Similar constructions work for supernatural bundles~\cite[\S6]{eis-schrey-banks}.}   How do we select the appropriate module or bundle to represent the summands appearing in the decomposition?
	\item  {\bf Filtration:}  Even if we overcome the first two obstacles, we still need to assemble these pieces, perhaps through a filtration.  How do we build such a filtration?
\end{enumerate}

On the module side, almost nothing is known.  Eisenbud, Erman and Schreyer obtain categorifications under highly restrictive hypotheses in~\cite{ees-filtering}, but their techniques avoid the issues raised by denominators and moduli.  

The example above is already challenging.  We might clear denominators by replacing $M$ by $M^{\oplus 3N}$ for some $N$.  This would yield:
\[
\beta(M^{\oplus 3N}) = \begin{pmatrix}2N&3N&-\\-&-&N\end{pmatrix} + \begin{pmatrix}N&-&-\\-&3N&2N\end{pmatrix}.
\]
The diagrams on the righthand correspond to unique modules.  Continuing with the notation of Example~\ref{ex:xy2}, the diagrams on the right must correspond to $(M')^{\oplus N}$ and $(M'')^{\oplus N}$.  We would thus want to realize $M^{\oplus 3N}$ as an extension of $(M')^{\oplus N}$ and $(M'')^{\oplus N}$.  Since there are no nonzero extensions of $M'$ by $M''$, the only possibility is an extension
\[
0\to (M')^{\oplus N} \to M^{\oplus N} \to (M'')^{\oplus N}\to 0.
\]
As originally observed by Sam and Weyman~\cite{sam-weyman}, this is also impossible: $M^{\oplus N}$ is annihilated by the linear form $x$ whereas the submodule $(M')^{\oplus N}$ is not annihilated by any linear form.

However, on the vector bundle side (in characteristic $0$), some recent progress was made.  In fact, the first two issues now seem to have a satisfactory answer.  The most naive idea for clearing  denominators is to take direct sums of $\cE$, which is equivalent to applying $\cE\mapsto \cE\otimes \cO^{\oplus N}$.  Instead, we introduce a collection of Fourier--Mukai transforms which can be thought of as ``twisting'' by something like an Ulrich sheaf.  These transforms clear denominators in exactly the way one would hope for~\cite[Corollary~1.4]{erman-sam-diagonals}.  In addition, these functors also resolve the moduli issue, as they send every supernatural bundle to a direct sum of the $\GL$-equivariant supernatural bundles~\cite[Corollary~1.10]{erman-sam-diagonals}.

In the case of Example~\ref{ex:PP2 example}, this is sufficient to categorify the decomposition.  In fact, after applying an appropriate Fourier--Mukai transform $\Phi$, we obtain
\[
\Phi(\cE) \cong \cE' \oplus \cE''^{\oplus 3}.
\]

However, beyond the cases covered in~\cite[Corollary~1.9]{erman-sam-diagonals}, there is no general procedure for producing a filtration from the various supernatural pieces.

\begin{conj}
Fix a vector bundle $\cE$ on $\PP^n$.  Let $f$ be the root sequence corresponding to the first step of the Boij--S\"oderberg decomposition of $\cE$ and let $\cF_f$ be the equivariant supernatural bundle of type $f$ constructed in~\cite[Theorem~6.2]{eis-schrey2}.  

Then there exists $r>0$, a Fourier--Mukai transform $\Phi$, and a short exact sequence
\[
0\to \cF_f^{\oplus r} \to \Phi(\cE) \to \cE'\to 0
\]
where:
\begin{enumerate}[\indent \rm (1)]
\item  The cohomology table of $ \Phi(\cE)$ is a scalar multiple of the cohomology table of $\cE$, and
\item  $\gamma(\cE)=\gamma(\cF_f^{\oplus r})+\gamma(\cE')$.
\end{enumerate}
\end{conj}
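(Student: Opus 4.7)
The plan is to proceed in four steps: clear denominators via a Fourier--Mukai transform from \cite{erman-sam-diagonals}, construct a map from $\cF_f^{\oplus r}$ into $\Phi(\cE)$, verify injectivity, and finally show that the long exact sequence in sheaf cohomology of the resulting short exact sequence splits into short exact sequences so that (2) holds.

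For Step 1, apply a Fourier--Mukai transform $\Phi$ of the type constructed in \cite[Corollary~1.4]{erman-sam-diagonals}, chosen so that $\gamma(\Phi(\cE))$ is an integer multiple of $\gamma(\cE)$ and so that (by \cite[Corollary~1.10]{erman-sam-diagonals}) every supernatural summand appearing in the Boij--S\"oderberg decomposition of $\gamma(\Phi(\cE))$ is already the equivariant bundle $\cF_{f'}$. The integer $r$ is then the (now integral) coefficient of $\cF_f$ in this decomposition, and condition (1) of the conjecture is automatic from the construction of $\Phi$. For Step 2, one studies $\Hom(\cF_f,\Phi(\cE))$. Since $f$ is the first root sequence in the decomposition, the expected dimension of this Hom should be at least $r$; equivariance, together with a Riemann--Roch style count using the known cohomology tables, is the natural tool. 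Pick $r$ ``generic'' morphisms to form $\varphi\colon \cF_f^{\oplus r}\to \Phi(\cE)$. For Step 3, generic $\varphi$ should be injective because the rank of $\Phi(\cE)$ exceeds $r\cdot\rank(\cF_f)$ (this follows from the cohomology tables and the fact that the Hilbert polynomial is a specialization of $\gamma$) and $\Phi(\cE)$ is torsion-free; a standard generic-degeneracy argument controls the locus where $\varphi$ drops rank.

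The main obstacle is Step 4: showing that $\cE':=\coker\varphi$ satisfies $\gamma(\cE')=\gamma(\Phi(\cE))-\gamma(\cF_f^{\oplus r})$. This is equivalent to the vanishing of every connecting map in the cohomology long exact sequence, i.e.\ to requiring that $\rH^i(\varphi(j))\colon \rH^i(\cF_f^{\oplus r}(j))\to \rH^i(\Phi(\cE)(j))$ is injective for every bidegree $(i,j)$ where $\rH^i(\cF_f(j))\neq 0$. The Boij--S\"oderberg decomposition guarantees $\dim\rH^i(\Phi(\cE)(j))\geq r\dim\rH^i(\cF_f(j))$ in precisely these bidegrees, because the root sequences $f'$ of the other summands are strictly larger than $f$ in the termwise order and therefore contribute zero cohomology in the ranges supporting $\rH^*(\cF_f)$. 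Thus the question becomes: does there exist a choice of $\varphi$ for which $\rH^i(\varphi(j))$ is injective in every such $(i,j)$ simultaneously? This is where I expect the real difficulty to lie. One plausible route is to realize $\Phi$ (and hence the sections of $\sheafHom(\cF_f,\Phi(\cE))$) via a Tate-resolution or Beilinson-type construction in which $\cF_f^{\oplus r}$ arises as a distinguished equivariant summand of an auxiliary resolution, and then read off the needed cohomological injectivity from the equivariant structure. An alternative approach is to argue by induction on the length of the decomposition, building $\cE'$ directly and using the fact that each step of the Boij--S\"oderberg algorithm decreases the number of nonzero entries of the cohomology table.
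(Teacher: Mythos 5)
This statement is a \emph{conjecture} in the paper, not a theorem: the authors explicitly frame it as an open problem (``beyond the cases covered in~\cite[Corollary~1.9]{erman-sam-diagonals}, there is no general procedure for producing a filtration from the various supernatural pieces''), so there is no proof in the paper to compare against. Your proposal is therefore not wrong in approach --- it accurately reproduces the strategy the authors have in mind (clear denominators and collapse moduli with a Fourier--Mukai transform from \cite{erman-sam-diagonals}, then try to split off $\cF_f^{\oplus r}$ as a subbundle) --- but it does not close the conjecture, and you are candid that it doesn't.

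Beyond the acknowledged gap at Step~4, two earlier steps are also incomplete. In Step~2, a Riemann--Roch count gives you the Euler characteristic $\chi(\sheafHom(\cF_f,\Phi(\cE)))$, not a lower bound on $\dim\Hom(\cF_f,\Phi(\cE))$; you would need a separate vanishing argument for the higher $\Ext$ groups, and it is not obvious one is available. In Step~3, injectivity of a generic $\varphi$ as a sheaf map is not forced merely by a rank inequality and torsion-freeness of the target: you need the generic section of $\sheafHom(\cF_f^{\oplus r},\Phi(\cE))$ to have maximal rank at the generic point, which again requires actually knowing something about the structure of this sheaf, not just its numerics. And in Step~4 your side remark --- that the other summands $\cF_{f'}$ with $f'>f$ contribute zero cohomology exactly where $\cF_f$ is nonzero --- is not correct as stated: if $f<f'$ termwise, the supports of $\rH^i(\cF_f(\cdot))$ and $\rH^i(\cF_{f'}(\cdot))$ generally overlap (for instance $\rH^0$ is nonzero for all $j$ large). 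What \emph{is} true, and what you actually need, is the numerical inequality $\dim\rH^i(\Phi(\cE)(j))\ge r\dim\rH^i(\cF_f(j))$, which follows from the assumed decomposition of the cohomology table; but, as you correctly say, that is only a necessary condition, and turning it into the existence of a $\varphi$ with $\rH^i(\varphi(j))$ simultaneously injective in every bidegree is precisely the open content of the conjecture.
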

In essence, the conjecture says that the Fourier--Mukai transforms constructed in~\cite{erman-sam-diagonals} can be used to categorify the first step of the Boij--S\"oderberg decomposition of any vector bundle.  If true, one could iterate this to categorify the entire decomposition.

We can ask for similar results for modules.  Even finding a natural categorification in the single case of Example~\ref{ex:xy2} would represent significant progress.

\begin{problem}
Find an operation, other than direct sums, which naturally clears denominators on the module side.
\end{problem}

\section{\BS theory and the tails of infinite resolutions}\label{subsec:beyond}
The first major progress in extending \BS theory to other projective varieties was due to Eisenbud and Schreyer, who observed that the cone of cohomology tables depends only mildly on the projective scheme~\cite[Theorem~5]{eis-schrey-abel}. In particular, they show that if $X\subseteq \PP^n$ is a projective, $d$-dimensional subscheme which has an Ulrich sheaf, then the cone of cohomology tables on $X$ (with respect to $\cO_X(1))$ equals the cone of cohomology tables for $\PP^d$.  This provides further motivation for Eisenbud and Schreyer's question about the existence of Ulrich sheaves~\cite[p. 543]{eis-schrey-chow}.

\begin{question}[Eisenbud-Schreyer]
Does every $X\subseteq\PP^n$ have an Ulrich sheaf?
\end{question}

Eisenbud and Erman extended \BS theory to many other graded rings, but only if one restricts to Betti tables of perfect complexes with finite length homology~\cite[Theorem~0.8]{eis-erman-cat}.  Their result is similar to the Eisenbud and Schreyer result, but with a linear Koszul complex replacing the Ulrich sheaf.  Namely, they show that if $R$ is a graded ring of dimension $d$ which has $d$ independent linear forms, then the cone of Betti tables of perfect complexes $\FF$ where all homology modules of $\FF$ have finite length lines up with the corresponding cone of Betti tables for $\kk[x_1,\dots,x_d]$.

Restricting to perfect complexes makes the situation much simpler, as one entirely avoids the complications involved in the study of infinite resolutions~\cite{avramov1,peeva-book}.  While the cone of Betti tables of resolutions has been worked out for some other graded rings~\cites{bbeg, kummini-sam, gheorghita-sam}, these are all fairly simple rings which avoid many of the complexities of infinite resolutions since they have finite Cohen--Macaulay representation type (i.e., there are only finitely many isomorphism classes of indecomposable maximal Cohen--Macaulay modules). Some natural next cases here are other rings of finite Cohen--Macaulay representation type: quadric hypersurfaces and the Veronese surface.

In addition, recent work of Avramov, Gibbons, and Wiegand provides the first examples beyond the case of finite Cohen--Macaulay representation type. More than just the cone, they work out the full semigroup of Betti tables for graded, Gorenstein algebras with Hilbert series $1+es+s^2$ for some $e$; when $e\geq 3$, these rings have wild representation type.

\begin{remark}
In \cite{kummini-sam, gheorghita-sam}, the extremal rays of the \BS cone are still pure free resolutions. These can be constructed for the quadric hypersurface $\kk[x_0,\dots,x_n]/(x_0^2)$ when $\kk$ is a field of characteristic $0$ by noting that this ring is the symmetric algebra on a super vector space with $1$ odd variable $x_0$ and $n$ even variables $x_1,\dots,x_n$ and then applying \cite[Theorem 0.1]{efw}. However, we have been unable to deform these resolutions to get pure free resolutions over higher rank quadrics.  
\end{remark}

An alternate approach, and one which is common in the study of infinite resolutions, would be to focus on the tails of the infinite resolutions.  In other words, we could define a cone of Betti tables where we only focus on modules $M$ which arise as ``sufficiently high'' syzygy modules.  The notion of ``sufficiently high'' will depend on the context, though~\cite[\S7]{eisenbud-peeva} provides a notion in the case where $R$ is a complete intersection ring, and this is a natural starting point.

\begin{problem}
Let $R$ be a graded complete intersection $R=S/(f_1,\dots,f_c)$ and let $\BQ^{\text{tail}}(R)$ be the cone of Betti tables $\beta(M)$ where $M$ is a sufficiently high syzygy module.  Describe $\BQ^{\text{tail}}(R)$.
\end{problem}

Again, there are many natural variants of this problem, such as where $R$ is: a Golod ring, a ring defined by a toric or monomial ideal, or the homogeneous coordinate ring of a curve under a very positive embedding.  In a different direction, see~\cite[Conjecture~1.6]{beks-local} for an open question about the local ring case.

\section{Exact sequences}
The long exact sequence in cohomology provides a central tool for studying the cohomology of a coherent sheaf, and it could thus be extremely fruitful to develop machinery that explores Boij--S\"oderberg theory in exact sequences.  The main challenge is that cohomology tables (or Betti tables) are subadditive, but not necessarily additive, over short exact sequences.

Suppose we have a short exact sequence of coherent sheaves on $\PP^n$
\[
0\to \mathcal A\to \cE \to \mathcal B\to 0
\]
where we fully understand the sheaf cohomology of $\mathcal A$ and $\mathcal B$.  A good example to keep in mind is the case where $\mathcal A$ and $\mathcal B$ are supernatural bundles.  After twisting by $\cO(j)$ we get a long exact sequence in cohomology:
\[
\cdots \to \rH^i(\cE(j)) \to \rH^i(\mathcal B(j)) \to \rH^{i+1}(\mathcal A(j))\to \rH^{i+1}(\mathcal \cE(j))\to \cdots
\]
If all connecting maps in cohomology are zero, then this maximizes all entries in the cohomology table of $\cE$ in the sense that $\rH^i(\cE(j)) \cong \rH^i(\mathcal A(j)) \oplus \rH^i(\mathcal B(j))$. If this holds for all $j$ then we have $\gamma(\cE)=\gamma(\mathcal A)+\gamma (\mathcal B)$. On the other hand, if some of the connecting maps were nonzero, then this reduces the cohomology table of $\gamma(\cE)$ via ``consecutive cancellations'', i.e., by simultaneously reducing $\rH^i(\cE(j))$ and $\rH^{i+1}(\cE(j))$ by the same amount.\footnote{The terminology of consecutive cancellations is common in the literature on Betti tables, but does not appear to have been used in discussion of sheaf cohomology.}

We thus expect some ambiguities in the cohomology table of an extension bundle like $\cE$, and here is where Boij--S\"oderberg theory might help.  Some patterns of consecutive cancellations will yield tables which lie inside the cone $C(\PP^n)$ of genuine cohomology tables, whereas other patterns might lie outside of that cone.  The following notation will be helpful.
\begin{notation}
For a root sequence $f$ we write $\sigma_f$ for the supernatural cohomology table of rank one and with root sequence $f$.  If $f$ has one entry, so $f=(i)$, then we simply use $\sigma_i$.
\end{notation}

\begin{example}\label{ex:triangle}
Consider a rank $10$ extension bundle:
\[
0\to \cO^{5}_{\PP^1}(-2)\to \cE \to \cO^{5}_{\PP^1}(2)\to 0.
\]
The maximal possible cohomology table is if the extension splits:
\[
\begin{matrix}
\cdots & 50&40&30&20&15&10&5&-&-&-& \cdots\\
\cdots & -&-&-&5&10&15&20&30&40&50 & \cdots
\end{matrix}.
\]
However, there might be some consecutive cancellations, and taking into account the symmetry imposed by Serre duality, the possibilities are:
\[
\begin{matrix}
\cdots & 50&40&30&20&15-a&10-b&5-a&-&-&- & \cdots\\
\cdots & -&-&-&5-a&10-b&15-a&20&30&40&50 & \cdots
\end{matrix}.
\]
But which pairs $a$ and $b$ can actually arise? It takes a bit of work to answer this. If we want the cohomology table to remain inside the cone then we get:
\[
10-b\geq 2(5-a); \qquad \text{equivalently, } 2a\geq b.
\]
This turns out to be the only constraint, and the range of possibilities for the cohomology table of $\cE$ can thus be described by a triangle in the cone of cohomology tables, with corners corresponding to $(a,b)=(0,0), (5,5)$ and $(5,10)$.  The corners of that triangle correspond to sums of supernatural tables, as illustrated in Figure~\ref{fig:triangle}.
\end{example}
\begin{figure}
\begin{tikzpicture}[scale=0.75]
\draw[->](0,0)--(0,5);
\draw[->](0,0)--(5,0);
\draw (2.5,-.5) node {{$a$}};
\draw (-.5,2.5) node {{$b$}};
\draw[-,thick](0,0)--(2.5,2.5)--(2.5,5)--cycle;
\fill[lightgray](0,0)--(2.5,2.5)--(2.5,5)--cycle;
\draw (0,0) node {$\bullet$};
\draw[right] (2.5,2.5) node {{$5\sigma_{-3}+5\sigma_1$}};
\draw (2.5,2.5) node {$\bullet$};
\draw[below] (0,0) node {{$5\sigma_{-2}+5\sigma_0$}};
\draw (-.5,2.5) node {{$b$}};
\draw[right] (2.5,5) node {{$10\sigma_{-1}$}};
\draw (2.5,5) node {$\bullet$};
\end{tikzpicture}\caption{\footnotesize The potential cohomology tables for an extension bundle form a polytope in the cone of cohomology tables; with $\mathcal E$ as in Example~\ref{ex:triangle}, the polytope is a triangle.}
\label{fig:triangle}
\end{figure}
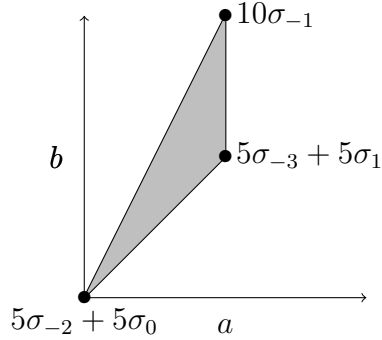

More generally, if $\cE$ is an extension of two sheaves $\mathcal A$ and $\mathcal B$, then the possible cohomology tables of $\cE$ is a polytope inside of $C(\PP^n)$.

\begin{problem}
Fix a short exact sequence of vector bundles
\[
0\to \mathcal A\to \cE \to \mathcal B\to 0.
\]
\begin{enumerate}[\indent \rm (1)]
\item  Assume that $\mathcal A$ is supernatural of type $f$ and $\mathcal B$ is supernatural of type $f'$.  Describe the polytope of possible cohomology tables of $\cE$ in terms of supernatural tables $\sigma_g$ for various root sequences $g$.
\item  For more general bundles $\mathcal A$ and $\mathcal B$, describe the polytope of possible cohomology tables of $\cE$ in terms of the Boij--S\"oderberg decompositions of $\mathcal A$ and $\mathcal B$.
\end{enumerate}
\end{problem}

There are many natural variants of this question.  For instance, one could aim to describe the potential cohomology tables of $\mathcal A$ in terms of $\mathcal E$ and $\mathcal B$; one could work with longer exact sequences; or one could pose analogous questions about Betti tables.

\section{Boij--S\"oderberg theory over a DVR}

Fix a coherent sheaf $\cF$ on $\PP^n_{\ZZ_p}$ that is flat over $\ZZ_p$.  How much can the cohomology table of $\cF$ ``jump'' when you pass from the generic point $\QQ_p$ to the special fiber $\mathbb F_p$?  Are there any restrictions other than having a constant Hilbert polynomial?  A version of Boij--S\"oderberg theory with $\ZZ_p$-coefficients would offer insights into this question.

More generally, fix a DVR $R$ and let $S:=R[x_0,\dots,x_n]$, where $\deg(x_i)=1$ for all $i$. In place of finite length modules, consider graded $S$-modules which are finite rank, free $R$-modules.  

\begin{problem}
Let $S=R[x_0,\dots,x_n]$ with $R$ a DVR.  Describe:
\begin{enumerate}[\indent \rm (1)]
\item  The cone of Betti tables of finitely generated, graded $S$-modules $M$ which are flat and finitely generated as $R$-modules.
\item  The cone of cohomology tables of vector bundles on $\PP^n_{R}$.
\end{enumerate}
\end{problem}

The functorial approach in \cite{eis-erman-cat} offers a possible approach.

\begin{problem} With notation as above:
\begin{enumerate}[\indent \rm (1)]
\item Extend the construction of the functor $\Phi$ from~\cite[\S2]{eis-erman-cat} to a pairing of derived categories:
\[
\Phi_R\colon \DD^b(R[x_0, \dots, x_n]) \times \DD^b(\PP^n_R)\to \DD^b(R[t]).
\]
\item Give explicit formulas, extending \cite[Theorem~2.3]{eis-erman-cat}, that relate the numerical invariants of $\Phi_R(\FF,\cE)$ to the numerical invariants of $\FF$ and $\cE$.
\end{enumerate}
\end{problem}

Even more generally, one might allow $R$ to be a regular local ring of dimension $>1$.  However when $R$ is a DVR, the target of the functor $\Phi$ has global dimension $2$.  Since much more is known about Betti tables in this context, we expect that the case where $R$ is a DVR is more tractable.

\section{Non-commutative analogues}

Consider the degree sequence $(0,1,3,4)$. The Herzog--K\"uhl equations state that any finite length module over a polynomial ring in $3$ variables with a pure resolution of type $(0,1,3,4)$ is a multiple of the following table:
\begin{align} \label{eqn:0134}
\begin{bmatrix} 1 & 2 & & \\ & & 2 & 1 \end{bmatrix}.
\end{align}
One can easily deduce that this Betti table is non-realizable: this would be the Betti table of $\kk[x,y,z]/I$ where $I$ is generated by two linear forms, which must then have a linear Koszul relation.

However, there is a way to realize this as the Betti table of a finite length module if we are willing to replace $\kk[x,y,z]$ by another algebra. In particular, define a $3$-dimensional Lie algebra $\fH$ (Heisenberg Lie algebra) with basis $\{x,y,z\}$ and the following multiplication
\[
[x,y] = z, \qquad [x,z] = [y,z] = 0.
\]
Recall that given any Lie algebra $\fg$, its universal enveloping algebra $\rU(\fg)$ is the tensor algebra on $\fg$ modulo the relations $x \otimes y - y \otimes x = [x,y]$ for all $x,y \in \fg$. In general, $\rU(\fg)$ is only a filtered algebra, and the associated graded algebra is the symmetric algebra $\Sym(\fg)$ by the Poincar\'e--Birkhoff--Witt theorem. However, if $\fg$ is graded, then the same is true for $\rU(\fg)$. In our case, $\fH$ is graded via $\deg(x) = \deg(y) = 1$ and $\deg(z) = 2$. In this case, minimal free resolutions of graded modules are well-defined. 

The Chevalley--Eilenberg complex (which becomes the usual Koszul complex under the PBW degeneration mentioned above) always gives a free resolution of the residue field, but is not necessarily minimal. For $\fH$, the Chevalley--Eilenberg complex looks like:
\[
0 \to \rU(\fH)(-4) \to \begin{array}{c} \rU(\fH)(-2) \\ \oplus\\ \rU(\fH)(-3)^{\oplus 2}
\end{array} \to 
\begin{array}{c} \rU(\fH)(-1)^{\oplus 2} \\ \oplus\\ \rU(\fH)(-2) \end{array} \to \rU(\fH).
\]
The two terms of degree $2$ cancel since it corresponds to the redundancy $xy - yx = z$. So we get the following minimal free resolution:
\[
0 \to \rU(\fH)(-4) \to \rU(\fH)(-3)^{\oplus 2} \to \rU(\fH)(-1)^{\oplus 2} \to \rU(\fH)
\]
and hence we can realize \eqref{eqn:0134}.

This suggests that non-realizable integral points in the Boij--S\"oderberg cone may be realizable over $\rU(\fg)$ for a $\ZZ_{>0}$-graded Lie algebra $\fg$. Note that a finite-dimensional $\ZZ_{>0}$-graded Lie algebra $\fg$ is necessarily nilpotent since $[\fg, \fg_i] \subset \fg_{i+1}$. Also, a standard graded polynomial ring in $n$ variables is $\rU(\kk^n)$ where $\kk^n$ is the abelian Lie algebra of dimension $n$ concentrated in degree $1$.

In dimension $2$, every nilpotent Lie algebra is abelian, and in dimension 3, the only possibilities are $\kk^3$ and $\fH$ (to normalize, we insist that the generators of the Lie algebra consist of degree 1 elements). It is easy to see that every integral point on a pure ray in the Boij--S\"oderberg cone in $2$ variables is realizable, so we offer the following question:

\begin{question}
For every degree sequence $(d_0,d_1,d_2,d_3)$, and every integral point on the corresponding ray in the Boij--S\"oderberg cone, is there a finite length module either over $\kk[x,y,z]$ or $\rU(\fH)$ whose Betti table is that integral point?
\end{question}

Of course, there is a natural extension for any number of variables:

\begin{question}
For every degree sequence $(d_0,\dots,d_n)$, and every integral point on the corresponding ray in the Boij--S\"oderberg cone, does there exist an $n$-dimensional $\ZZ_{>0}$-graded Lie algebra $\fg$ generated in degree 1, and a finite length module over $\rU(\fg)$ whose Betti table is that integral point?
\end{question}

One source of examples for $\ZZ_{>0}$-graded Lie algebras are the nilpotent radicals of parabolic subalgebras of (split) reductive Lie algebras. For example, parabolic subalgebras of $\gl_n$ are subalgebras of block upper-triangular matrices (where the block sizes are fixed) the nilpotent radical is the subalgebra where the diagonal blocks are identically $0$. Over a field of characteristic $0$, Kostant's version of the Borel--Weil--Bott theorem (see \cite[\S 2]{sam} for a convenient reference and combinatorial details for the symplectic Lie algebra which will be mentioned below) calculates the Tor groups of the restriction of an irreducible representation from the reductive Lie algebra to the nilpotent one. In fact, the EFW complexes are a special case of Kostant's calculation where the reductive Lie algebra is $\gl_{n+1}$ and we take the nilpotent radical of the subalgebra of block upper-triangular matrices with block sizes $1$ and $n$ (in this case, the nilpotent radical is abelian). 

Furthermore, one can construct many kinds of pure resolutions using this construction, though not necessarily all degree sequences are realizable (in the EFW case, they are). We point out that $\fH$ can be realized as the nilpotent radical for a parabolic subalgebra of the symplectic Lie algebra $\mathfrak{sp}_4$ (see \cite[\S 2.2]{sam}), and using this representation-theoretic perspective, one can realize more integral points which are not realizable over $\kk[x,y,z]$ than the example presented above. Details will appear in forthcoming work of the second author.

\begin{remark}
We are appealing to the fact that $\rU(\fg)$ ``looks like'' a graded polynomial algebra, which is made precise by the Poincar\'e--Birkhoff--Witt theorem. One could also study other non-commutative algebras that look like graded polynomial algebras. One desirable property is finite global dimension. As a starting point, one may consider Artin--Schelter algebras, see \cite{artin-schelter}.
\end{remark}

\section{Connection with Stillman's Conjecture}
Let $\widehat{S}:=\kk[x_0,x_1,x_2,\dots]$ and fix positive integers $e_1,\dots,e_r$.  Let $I$ be an ideal with generators in degrees $e_1,\dots,e_r$.  Stillman's Conjecture asks whether there is an upper bound for the projective dimension of $\widehat{S}/I$ which depends only on $e_1,\dots,e_r$; in particular, the bound should not depend on the ideal $I$~\cite[Problem~3.14]{peeva-stillman}.  

Caviglia has shown that a positive answer to Stillman's Conjecture is equivalent to the existence of an upper bound on the regularity of $\widehat{S}/I$ which depends only on $e_1,\dots,e_r$~\cite[Theorem 29.5]{peeva-book}.  This allows us to rephrase Stillman's conjecture as a finiteness statement about Betti tables.  For simplicity, we focus on the case where all of the $e_i$ have the same value $e$. Then Stillman's conjecture is equivalent to the following:

\begin{conj}[Alternate Stillman]
There are only finitely many Betti tables $\beta(M)$ of $\widehat{S}$-modules $M$ satisfying:
\[
\beta_{0,j}(M)=\begin{cases}
1 & j=0\\
0 & j\ne 0
\end{cases}
\qquad
\text{ and }
\qquad
\beta_{1,j}(M)=\begin{cases}
r & j=e\\
0 & j\ne e
\end{cases}.
\]
\end{conj}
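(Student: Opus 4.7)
The plan is to establish both directions of the equivalence between Stillman's Conjecture (for ideals generated by $r$ forms of degree $e$) and the Alternate Stillman statement. The $\Leftarrow$ direction is immediate: if the set of Betti tables satisfying the hypothesis is finite, then $\pdim(\widehat{S}/I)$ takes only finitely many values, so it is bounded uniformly, which is precisely Stillman's Conjecture in this case.

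For the $\Rightarrow$ direction, I would first apply Caviglia's theorem (cited in the excerpt) to promote the projective dimension bound to a simultaneous bound $\pdim(\widehat{S}/I) \leq P(r,e)$ and $\reg(\widehat{S}/I) \leq R(r,e)$. Together with the minimality constraint $\beta_{i,j} = 0$ for $j < i$, this confines every admissible Betti table to the fixed finite region $\{(i,j) : 0 \leq i \leq P,\ i \leq j \leq i + R\}$. Because the hypothesis pins down the rows $\beta_{0,\ast}$ and $\beta_{1,\ast}$ completely, any variation happens in this finite block of higher-degree positions; it remains to show that each individual entry $\beta_{i,j}(\widehat{S}/I)$ with $i \geq 2$ is bounded by some function of $r$ and $e$.

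To carry out this last step I would invoke the Ananyan--Hochster ``small subalgebra'' theorem: any ideal $I \subset \widehat{S}$ generated by $r$ forms of degree at most $e$ sits inside a graded polynomial subalgebra $B \subset \widehat{S}$ on at most $M(r,e)$ generators, in such a way that $\widehat{S}$ is free over $B$ and $I = (I \cap B) \cdot \widehat{S}$. Freeness makes minimal free resolutions compatible with base change, giving $\beta_{i,j}^{\widehat{S}}(\widehat{S}/I) = \beta_{i,j}^{B}(B/(I\cap B))$, and the computation reduces to quotients of a polynomial ring $B$ in at most $M(r,e)$ variables by an $r$-generated ideal sitting in the finite-dimensional space $B_e$. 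Such ideals are parametrized by the finite-dimensional Grassmannian $\Gr(r, B_e)$, and upper semi-continuity of Betti numbers across this Grassmannian produces only finitely many Betti tables.

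The main obstacle is that the $\Rightarrow$ direction as sketched relies on the small-subalgebra structural theorem, which is itself one of the central inputs to the Ananyan--Hochster proof of Stillman's Conjecture rather than a formal consequence of a pdim bound. A genuinely self-contained argument---showing that the normalization $\beta_0 = 1$ in degree $0$ and $\beta_1 = r$ in degree $e$, combined with uniform bounds on $\pdim$ and $\reg$, automatically forces every Betti entry to be bounded by a function of $r$ and $e$---would be much more satisfying. I do not see such an argument, and this is where I expect the real work to lie.
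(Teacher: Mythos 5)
The paper does not actually prove this statement: it is stated as a conjecture, and the paper merely asserts that it is equivalent to Stillman's Conjecture via Caviglia's theorem, without spelling out the details. So the right comparison is between your proposed argument for the equivalence and the standard argument the authors presumably have in mind.

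Your $\Leftarrow$ direction is fine, and you are right that in the $\Rightarrow$ direction the real issue is showing that uniform bounds $\pdim \le P(r,e)$ and $\reg \le R(r,e)$ bound the individual Betti entries, not just the support of the table. But the elementary argument you say you cannot find does exist, and it does not require Ananyan--Hochster. Pass to an infinite field (Betti numbers are unchanged under base field extension), and work over $S_n = \kk[x_0,\dots,x_n]$ with $n$ large enough to contain a generating set for $I$; the Betti numbers over $\widehat S$ agree with those over $S_n$. Auslander--Buchsbaum plus the pdim bound gives $\operatorname{depth}(S_n/I) = (n+1) - \pdim(S_n/I) \ge n+1-P$, so a sequence of $n+1-P$ generic linear forms is $S_n/I$-regular. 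Quotienting by this regular sequence preserves the Betti table and the regularity, and lands you in a polynomial ring $S'$ in at most $P$ variables with an \emph{Artinian} quotient $S'/I'$. The bound $\reg(S'/I') \le R$ then forces $(S'/I')_j = 0$ for $j > R$, hence $\dim_\kk (S'/I')_j \le \binom{P+j-1}{j}$ for $0 \le j \le R$; and computing Tor with the Koszul complex on the $P$ variables of $S'$ gives
\[
\beta_{i,j}(S'/I') \;\le\; \binom{P}{i}\,\dim_\kk (S'/I')_{j-i},
\]
which is bounded by a function of $P$ and $R$ alone. So each entry of the table, the support of the table, and therefore the number of possible tables are all finite.

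Your route through the Ananyan--Hochster small-subalgebra theorem is, as you already suspect, both circular and unnecessary: that theorem is essentially the engine of the eventual proof of Stillman's Conjecture, so using it to establish the equivalence defeats the purpose, and in fact it would make the pdim/reg hypotheses superfluous (you would be proving the conjecture outright rather than the equivalence the paper intends). You should also be cautious about the specific form of the small-subalgebra statement you quote --- the claim that $\widehat S$ is free over $B$ and that Betti numbers base-change cleanly requires care --- but this is moot given that the artinian reduction above is the intended argument.
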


Note that any finitely presented $\widehat{S}$-module has a presentation matrix that involves only finitely many variables.  Hence the Betti table of every $\widehat{S}$-module can be decomposed as a sum of the Betti tables of Cohen--Macaulay modules with pure resolutions.  Thus, by taking the union of the cones of Betti tables of resolutions over $\kk[x_0,x_1,\dots,x_n]$ as $n\to \infty$, we will obtain the cone $\BQ(\widehat{S})$ of resolutions of Betti tables of finitely presented $\widehat{S}$-modules.

One could then imagine trying to prove Stillman's Conjecture via Boij--S\"oderberg theory by showing that there are only finitely many {\em lattice points} in $\BQ(\widehat{S})$  that satisfy the conditions of the conjecture.  However, this turns out to be false.

\begin{prop}\label{prop:virtual}
Fix any $e\geq 1$ and $r\geq 2$. Then there exist infinitely many lattice points $D\in \BQ(\widehat{S})$ which satisfy
\[
\beta_{0,j}(D)=\begin{cases}
1 & j=0\\
0 & j\ne 0
\end{cases}
\qquad
\text{ and }
\qquad
\beta_{1,j}(D)=\begin{cases}
r & j=e\\
0 & j\ne e
\end{cases}.
\]
In other words, there exist infinitely many lattice points in $D\in\BQ(\widehat{S})$ that look like the Betti table of an algebra generated by $r$ forms of degree $e$. In fact, there exist infinitely many $D$ satisfying the above and which also look like pure resolutions.
\end{prop}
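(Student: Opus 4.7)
The plan is to exhibit, for each integer $a \geq 1$, a distinct lattice point $D^{(a)} \in \BQ(\widehat{S})$ that is a positive rational multiple of a single pure diagram and satisfies the prescribed first two columns. This simultaneously proves the main claim and the ``in fact, pure resolutions'' strengthening.

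The family I would use is the degree sequence
\[
d^{(a)} \;:=\; (0,\,e,\,e(a+1),\,e(a+2),\,\dots,\,e \cdot ra),
\]
strictly increasing with $p+1=(r-1)a+2$ entries. Let $\pi^{(a)}$ denote the associated pure diagram in $\BQ(\widehat{S})$, scaled so that $\pi^{(a)}_{0,0}=1$. The Herzog--K\"uhl equations give the telescoping identity
\[
\frac{\pi^{(a)}_{1,e}}{\pi^{(a)}_{0,0}} \;=\; \prod_{j=2}^{p}\frac{d^{(a)}_j}{d^{(a)}_j-e} \;=\; \prod_{j=2}^{p}\frac{a+j-1}{a+j-2} \;=\; \frac{a+p-1}{a} \;=\; r,
\]
so with this normalization $\pi^{(a)}$ automatically has $\beta_{0,0}=1$ and $\beta_{1,e}=r$.

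The main step is to verify that the remaining entries of $\pi^{(a)}$ are positive integers, so that $\pi^{(a)}$ is an actual lattice point. Herzog--K\"uhl together with the structure of $d^{(a)}$ yields the closed form
\[
\pi^{(a)}_{i,d_i} \;=\; \frac{1}{a}\binom{ra}{a+i-1}\binom{a+i-3}{i-2}, \qquad 2\leq i\leq p,
\]
and my plan is to verify integrality either by a $p$-adic argument via Kummer's theorem, comparing $v_p(a)$ with $v_p\bigl(\binom{ra}{a+i-1}\binom{a+i-3}{i-2}\bigr)$ prime by prime, or by identifying these numbers with Fuss--Catalan-type lattice path counts. A cleaner alternative is to invoke the Eisenbud--Schreyer construction, which produces a module whose Betti table is a positive integer multiple of $\pi^{(a)}$, and then to check that for this particular family the primitive pure diagram has leading entry $1$.

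Once integrality is established, the $\pi^{(a)}$ for $a=1,2,3,\dots$ are pairwise distinct lattice points of $\BQ(\widehat{S})$, because their supports have strictly increasing cardinality $(r-1)a+2$. Each satisfies the prescribed first two columns and is (a scalar multiple of) a pure diagram, giving the infinitely many $D$ claimed in the proposition. I expect the integrality step to be the main obstacle: it is transparent in small examples (for instance $(e,r,a)=(1,2,2)$ recovers the table \eqref{eqn:0134}), but producing a uniform proof across all $(e,r,a)$ requires either a combinatorial identity or an explicit categorification, and this is the subtlest part of the argument.
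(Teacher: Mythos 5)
Your proposal is essentially the paper's own argument: your parameter $a$ corresponds to the paper's $p$ via $a=p+1$, and your degree sequence $(0,e,e(a+1),\dots,e\cdot ra)$ with $(r-1)a+2$ entries is identical to the paper's $d^{(p)}_0=0,\ d^{(p)}_1=e,\ d^{(p)}_i=e(p+i)$ for $2\le i\le n$ where $n=r+p(r-1)$. Both arguments invoke Herzog--K\"uhl, normalize to $\beta_{0,0}=1$, reduce to $e=1$ (your binomial expression is visibly $e$-independent, which is the same reduction the paper makes explicit), and must show the normalized pure diagram is integral; the paper's proof is likewise only a sketch that asserts this final integrality ``by a detailed but elementary computation,'' so you leave the same step open that the authors do. Your explicit closed form $\pi^{(a)}_{i,d_i}=\tfrac{1}{a}\binom{ra}{a+i-1}\binom{a+i-3}{i-2}$ is correct (it reproduces the displayed example) and is a useful supplement to what the paper writes, but it does not change the route.
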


Yet this may not be the end of the story.  Remark~\ref{rmk:obstruction} shows that the ``virtual'' Betti tables constructed in the proof of the proposition cannot arise as the Betti table of an actual module, and we know of no other such infinite families.

\begin{question}\label{question:still 2}
Ignoring pure Betti table counterexamples, like those constructed in Proposition~\ref{prop:virtual}, do there exist finitely many lattice points in $D\in\BQ(\widehat{S})$ that look like the Betti table of an algebra generated by $r$ forms of degree $e$?
\end{question}

A positive answer to this question would certainly be interesting as it would imply Stillman's Conjecture.  But a negative answer would also be interesting for a different reason: assuming that we also believe Stillman's Conjecture, a negative answer to Question~\ref{question:still 2} would suggest that almost all lattice points in the cone of Betti tables that ``look like'' the Betti table of a cyclic module do not come from an actual Betti table.  In other words, a negative answer would suggest that the ``noise'' of the fake Betti tables inside the cone overwhelms the ``signal'' of the actual Betti tables, at least for some questions.

\begin{proof}[Proof of Proposition~\ref{prop:virtual} $($sketch$)$]
Fix $p\geq 0$ and let $n:= r+p(r-1)$.  For each such $p$, we will define a degree sequence $d^{(p)} \in \ZZ^{n+1}$, and we will show that for infinitely many of these degree sequences, the smallest integral pure diagram of type $d^{(p)}$ satisfies the conditions of the proposition.  We set $d^{(p)}_0=0$ and $d^{(p)}_1=e$.  For $i=2, \dots, n$ we then set $d^{(p)}_i:=e(p+i)$.  

We let $\pi$ be the normalized pure diagram of type $d^{(p)}$, so that $\beta_{0,0}(\pi)=1$ by assumption.  It follows from~\cite[\S2.1]{boij-sod1} that
\[
\beta_{i,d^{(p)}_i} = \frac{\prod_{j\ne 0}^n d^{(p)}_j}{\prod_{j\ne i}^n |d^{(p)}_i-d^{(p)}_j| } .
\]
To complete the proof we must show that each of these expressions is an integer.  Since every entry of $d^{(p)}$ is divisible by $e$, we can reduce to the case where $e=1$.  A detailed but elementary computation then confirms that each of these expressions is an integer.
\end{proof}

\begin{remark}\label{rmk:obstruction}
For a given $r\geq 2$ and $p>0$, the virtual Betti table constructed in the proof of Proposition~\ref{prop:virtual} would correspond to a module with codimension $r+p(r-1)$.  But this would be the Betti table of a module $\beta(S/I)$ where $I$ is an ideal with $r$ generators.  In particular, the codimension of $S/I$ is bounded above by $r$, which is a contradiction.  Thus, the pure Betti tables constructed in the proof of Proposition~\ref{prop:virtual} cannot correspond to an actual Betti table.  Of course, some scalar multiple of each table does come from an actual Betti table.
\end{remark}

\begin{example}
We consider the examples from the proof of Proposition~\ref{prop:virtual} in the case $e=2$ and $r=3$.  These would correspond to algebras $S/I$ where $I$ is generated by three quadrics. For any prime $p$, there is a pure diagram with degree sequence
\[
d^{(p)}:= (0,2,4+2p,6+2p,\dots,6+6p)\in \ZZ^{2p+4}
\]
that looks like the Betti table of such an algebra.   With $p=3$, this yields
\[
\begin{small}
\begin{pmatrix}
1&&&&&&&\\
&3&&&&&&\\
&&&&&&&&\\
&&&&&&&&\\
&&42&&&&&\\
&&&126&&&&\\
&&&&168&&&\\
&&&&&120&&\\
&&&&&&45&\\
&&&&&&&7\\
\end{pmatrix}.
\end{small}
\]
Yet a result of Eisenbud and Huneke implies that such an algebra has projective dimension at most $4$~\cite[Theorem~3.1]{mccullough-seceleanu}, and so Stillman's Conjecture is true in this case, despite the presence of these ``fake'' Betti tables. 
\end{example}

\section{Extremal rays}\label{sec:extremal rays}
A number of questions remain about the extremal rays.  To begin with, Eisenbud, Fl{\o}ystad, and Weyman conjecture that every sufficiently large integral point on an extremal ray comes from an actual Betti table~\cite[Conjecture~6.1]{efw}.  This conjecture remains open, though it is known to be false for interior rays in the cone~\cite[Example~1.7]{ees-filtering}.  One could ask a similar question for cohomology tables.

In addition, $\GL$-equivariance plays a serendipitous role in Boij--S\"oderberg theory.  We restrict to characteristic zero and to the finite length cone $\BQ^{n+1}(S)$ and the vector bundle cone $\CvbQ(\PP^n)$.  One can construct each extremal ray of these two cones by a $\GL_{n+1}$-equivariant object: see~\cite[\S3]{efw} for the equivariant pure resolutions and~\cite[Theorem~6.2]{eis-schrey1} for the equivariant supernatural bundles.  An immediate corollary is that {\em every} ray in $\BQ^{n+1}(S)$ or $\CvbQ(\PP^n)$ can be realized by an equivariant object:

\begin{cor}\label{cor:equivariant multiple} Let $\kk$ be a field of characteristic $0$ and let $S=\kk[x_0,\dots,x_n]$.  
\begin{enumerate}[\indent \rm (1)]
\item For any finite length, graded, $S$-module $M$, there exists a finite length, graded, and $\GL_{n+1}$-equivariant module $N$ such that $\beta(N)$ is a scalar multiple of $\beta(M)$.
\item  For any vector bundle $\cE$ on $\PP^n_k$, there is a $\GL_{n+1}$-equivariant vector bundle $\cF$ on $\PP^n$ such that $\gamma(\cE)$ is a scalar multiple of $\gamma(\cF)$.
\end{enumerate}
\end{cor}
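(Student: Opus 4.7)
The plan is to combine the simplicial structure of the Boij--S\"oderberg cones with the equivariant constructions of the extremal rays. By the Boij--S\"oderberg decomposition theorem, for any finite length graded module $M$ we have a finite positive rational decomposition
\[
\beta(M) = \sum_{i=1}^k c_i \pi_{d^{(i)}},
\]
where each $\pi_{d^{(i)}}$ is the (normalized) pure diagram of type $d^{(i)}$; and for any vector bundle $\cE$ on $\PP^n$ a finite positive rational decomposition
\[
\gamma(\cE) = \sum_{i=1}^\ell c'_i \sigma_{f^{(i)}}.
\]
Finiteness in both cases holds because $\beta(M)$ and $\gamma(\cE)$ each have only finitely many nonzero entries, so only finitely many extremal rays can contribute.

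First I would invoke the EFW construction~\cite[\S3]{efw} to produce, for each degree sequence $d^{(i)}$, a finite length graded $\GL_{n+1}$-equivariant $S$-module $N_i$ whose Betti table is a positive rational multiple of $\pi_{d^{(i)}}$; and likewise invoke \cite[Theorem~6.2]{eis-schrey1} to produce, for each root sequence $f^{(i)}$, a $\GL_{n+1}$-equivariant vector bundle $\mathcal F_i$ on $\PP^n$ whose cohomology table is a positive rational multiple of $\sigma_{f^{(i)}}$. These constructions are available precisely because we are in characteristic zero.

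Next I would clear denominators. Choose a positive integer $C$ so that, for every $i$, the product $C c_i$ divided by the rational scaling factor relating $\beta(N_i)$ to $\pi_{d^{(i)}}$ is a nonnegative integer $a_i$. Then
\[
C\,\beta(M) = \sum_{i=1}^k a_i\,\beta(N_i) = \beta\!\left(\bigoplus_{i=1}^k N_i^{\oplus a_i}\right).
\]
Setting $N := \bigoplus_{i=1}^k N_i^{\oplus a_i}$ yields a finite length graded module with $\beta(N) = C\,\beta(M)$, and $N$ is $\GL_{n+1}$-equivariant because a direct sum of equivariant modules carries the direct sum equivariant structure. This establishes (1). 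The identical argument, with $\oplus$ of equivariant vector bundles in place of $\oplus$ of equivariant modules, establishes (2).

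There is no substantive obstacle here; the corollary is genuinely immediate once one has in hand the simplicial decomposition, the equivariant realizations of the extremal rays, and the observation that equivariance is preserved by direct sums. The only point worth noting is that one cannot hope to realize $\beta(M)$ itself by an equivariant module in general (because of the denominators introduced by the decomposition), which is exactly why the statement is only up to scalar multiple and foreshadows the categorification questions of \S\ref{sec:categorification}.
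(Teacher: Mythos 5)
Your proof is correct and takes essentially the same route the paper intends: decompose via the Boij--S\"oderberg theorem into finitely many pure/supernatural tables, realize each extremal ray equivariantly via the EFW construction and Eisenbud--Schreyer's equivariant supernatural bundles, clear denominators, and take direct sums (which preserve equivariance). The paper states this only in passing as ``an immediate corollary,'' and your write-up is a faithful expansion of that argument with no gaps.
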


\begin{question}
Is there an intrinsic reason why every Betti table of a finite length module, and every cohomology table of a vector bundle, can be realized $($up to scalar multiple$)$ by an equivariant object?  Is there a simpler proof of this fact?
\end{question}

\section{More topics}\label{sec:more topics}

\subsection{Toric Boij--S\"oderberg theory}  
Many researchers have observed that it would be natural to try to extend \BS theory to toric varieties and free complexes over their multigraded Cox rings.  Eisenbud and Schreyer have made a conjecture about the extremal rays of the cone of cohomology on  $\PP^1\times \PP^1$~\cite{eis-schrey-abel}; several authors have partial results for cones of Betti tables over a polynomial ring with $\ZZ^2$ or $\ZZ^s$ grading~\cite{boij-floystad,floystad-multigraded,bek-flavors}; and ~\cite[\S11]{eis-erman-cat} develops some of the duality aspects for toric varieties.  Yet even taken together, these results are far from providing a complete picture for any toric variety, and it appears to be quite challenging to develop such a picture even for $\PP^1\times \PP^1$.  A different possible direction is to extend Boij and Smith's work on cones of Hilbert functions to the multigraded case; see their remarks in~\cite[p.~10,317]{boij-smith}.

\subsection{Equivariant Boij--S\"oderberg theory}

Since pure free resolutions over a polynomial ring have equivariant realizations over a field of characteristic $0$, one can ask whether there is a meaningful description of the cone of ``equivariant Betti tables''.  See \cite[\S 4]{sam-weyman} for the setup and partial progress.  A natural variant is to work on a Grassmannian and to focus on $\GL$-equivariant free resolutions and $\GL$-equivariant cohomology tables.  There is work in progress in this direction due to Ford--Levinson and Ford--Levinson--Sam.

\subsection{Monomial ideals and combinatorics}
A number of authors have explored the boundary between Boij--S\"oderberg theory and combinatorial commutative algebra.  One remarkable recent result is Mayes-Tang's proof \cite{mayes-tang} of Engstr{\"o}m's Conjecture on stabilization of decompositions of powers of monomial ideals~\cite{engstrom}.  This raises the question of whether similar asymptotic stabilization results can be expected in other contexts.

In another direction, Fl\o ystad \cite{floystad-triplets} has proposed a vast generalization of Boij--S\"oderberg cones by moving from Betti tables to certain triples of homological data which are defined on the category of squarefree monomial modules.  This in turn has relations to the cone of hypercohomology tables of complexes of coherent sheaves and Tate resolutions~\cite{floystad-zipping}, and Fl\o ystad formulates many fascinating questions in those papers.

See~\cite{cook, engstrom-stamps, gibbons, nagel-sturgeon} for some other work that mixes Boij-S\"oderberg theory and combinatorics.

\subsection{Asymptotic Boij--S\"oderberg decompositions of Veroneses}  Let $X$ be a smooth projective variety with a very ample divisor $A$.  For any $d>0$ we can embed $X\subseteq \PP^{r_d}$ by the complete liner series $|dA|$ and study the syzygies of $X$.  Ein and Lazarsfeld ask: as $d\to \infty$, which Betti numbers of $X$ will be nonzero? They show that in the limit, the answer only depends on the dimension of the variety of $X$, and not on any more refined geometric properties about $X$~\cite{ein-lazarsfeld-asymptotic}.  A natural followup is to ask about the asymptotic behavior of the Boij--S\"oderberg decomposition of these Betti tables.  See~\cite[Problem~3.6]{ein-erman-lazarsfeld} and~\cite{erman-high-degree-curve}.

\section*{Acknowledgments}
We would like to thank the participants of the \BS Theory group at the Bootcamp for the 2015 Algebraic Geometry Summer Research Institute, whose ideas and questions informed this note.  Several of these questions were developed in speculative conversations with colleagues, and we also thank those individuals, including: Christine Berkesch Zamaere, Mats Boij, David Eisenbud, Courtney Gibbons, Jason McCullough, Frank-Olaf Schreyer, Greg Smith, Jerzy Weyman, and many more.

\begin{bibdiv}
\begin{biblist}
\begin{small}

\bib{artin-schelter}{article}{
   author={Artin, Michael},
   author={Schelter, William F.},
   title={Graded algebras of global dimension $3$},
   journal={Adv. in Math.},
   volume={66},
   date={1987},
   number={2},
   pages={171--216},
}

\bib{avramov1}{article}{
   author={Avramov, Luchezar L.},
   title={Infinite free resolutions},
   conference={
      title={Six lectures on commutative algebra},
      address={Bellaterra},
      date={1996},
   },
   book={
      series={Progr. Math.},
      volume={166},
      publisher={Birkh\"auser},
      place={Basel},
   },
   date={1998},
   pages={1--118},
}

\bib{avramov-buchweitz-betti}{article}{
   author={Avramov, Luchezar L.},
   author={Buchweitz, Ragnar-Olaf},
   title={Lower bounds for Betti numbers},
   journal={Compositio Math.},
   volume={86},
   date={1993},
   number={2},
   pages={147--158},
}

\bib{bbeg}{article}{
      author={Berkesch, Christine},
      author={Burke, Jesse},
      author={Erman, Daniel},
      author={Gibbons, Courtney},
      title={The cone of Betti diagrams over a hypersurface ring of low embedding dimension},
      journal={J. Pure Appl. Algebra},
note={\arXiv{1109.5198v2}},
   volume={16},
   date={2012},
   pages={121--141},
   }

\bib{bek-flavors}{article}{
   author={Berkesch, Christine},
   author={Erman, Daniel},
   author={Kummini, Manoj},
   title={Three flavors of extremal Betti tables},
   conference={
      title={Commutative algebra},
   },
   book={
      publisher={Springer, New York},
   },
note={\arXiv{1207.5707v1}},
   date={2013},
   pages={99--121},
}

\bib{beks-local}{article}{
   author={Berkesch, Christine},
   author={Erman, Daniel},
   author={Kummini, Manoj},
   author={Sam, Steven V},
   title={Shapes of free resolutions over a local ring},
   journal={Math. Ann.},
   volume={354},
   date={2012},
   number={3},
   pages={939--954},
      note={\arXiv{1105.2244v2}},
}

\bib{beks-tensor}{article}{
   author={Berkesch Zamaere, Christine},
   author={Erman, Daniel},
   author={Kummini, Manoj},
   author={Sam, Steven V},
   title={Tensor complexes: multilinear free resolutions constructed from
   higher tensors},
   journal={J. Eur. Math. Soc. (JEMS)},
   volume={15},
   date={2013},
   number={6},
   pages={2257--2295},
   note={\arXiv{1101.4604v5}},
}

\bib{boij-floystad}{article}{
   author={Boij, Mats},
   author={Fl{\o}ystad, Gunnar},
   title={The cone of Betti diagrams of bigraded Artinian modules of
   codimension two},
   conference={
      title={Combinatorial aspects of commutative algebra and algebraic
      geometry},
   },
   book={
      series={Abel Symp.},
      volume={6},
      publisher={Springer},
      place={Berlin},
   },
   date={2011},
   pages={1--16},
note={\arXiv{1001.3238v1}},
}

\bib{boij-smith}{article}{
   author={Boij, Mats},
   author={Smith, Gregory G.},
   title={Cones of Hilbert functions},
   journal={Int. Math. Res. Not. IMRN},
   date={2015},
   number={20},
   pages={10314--10338},
   note = {\arXiv{1404.7341v1}}
}

\bib{boij-sod1}{article}{
    AUTHOR = {Boij, Mats},
    AUTHOR = {S{\"o}derberg, Jonas},
     TITLE = {Graded {B}etti numbers of {C}ohen-{M}acaulay modules and the
              multiplicity conjecture},
   JOURNAL = {J. Lond. Math. Soc. (2)},
  FJOURNAL = {Journal of the London Mathematical Society. Second Series},
    VOLUME = {78},
      YEAR = {2008},
    NUMBER = {1},
     PAGES = {85--106},
note={\arXiv{math/0611081v2}},
}

\bib{boij-sod2}{article}{
   author={Boij, Mats},
   author={S{\"o}derberg, Jonas},
   title={Betti numbers of graded modules and the multiplicity conjecture in
   the non-Cohen-Macaulay case},
   journal={Algebra Number Theory},
   volume={6},
   date={2012},
   number={3},
   pages={437--454},
note = {\arXiv{0803.1645v1}},
}

\bib{boocher}{article}{
   author={Boocher, Adam},
   title={Free resolutions and sparse determinantal ideals},
   journal={Math. Res. Lett.},
   volume={19},
   date={2012},
   number={4},
   pages={805--821},
note={\arXiv{1111.0279v2}},
}

\bib{buchs-eis-gor}{article}{
    AUTHOR = {Buchsbaum, David A.},
    AUTHOR = {Eisenbud, David},
     TITLE = {Algebra structures for finite free resolutions, and some
              structure theorems for ideals of codimension {$3$}},
   JOURNAL = {Amer. J. Math.},
  FJOURNAL = {American Journal of Mathematics},
    VOLUME = {99},
      YEAR = {1977},
    NUMBER = {3},
     PAGES = {447--485},
}

\bib{chara-evans-problems}{incollection}{
    AUTHOR = {Charalambous, H.},
    AUTHOR = {Evans Jr., E. G.},
     TITLE = {Problems on {B}etti numbers of finite length modules},
 BOOKTITLE = {Free resolutions in commutative algebra and algebraic geometry
              ({S}undance, {UT}, 1990)},
    SERIES = {Res. Notes Math.},
    VOLUME = {2},
     PAGES = {25--33},
 PUBLISHER = {Jones and Bartlett},
   ADDRESS = {Boston, MA},
      YEAR = {1992},
}

 \bib{cook}{article}{
    author={Cook, David, II},
    title={The structure of the Boij--S\"oderberg posets},
    journal={Proc. Amer. Math. Soc.},
    volume={139},
    date={2011},
    number={6},
    pages={2009--2015},
note={\arXiv{1006.2026v1}},
 }

\bib{ein-erman-lazarsfeld}{article}{
   author={Ein, Lawrence},
   author={Erman, Daniel},
   author={Lazarsfeld, Robert},
   title={Asymptotics of random Betti tables},
   journal={J. Reine Angew. Math.},
   volume={702},
   date={2015},
   pages={55--75},
note={\arXiv{1207.5467v1}},
}

\bib{ein-lazarsfeld-asymptotic}{article}{
   author={Ein, Lawrence},
   author={Lazarsfeld, Robert},
   title={Asymptotic syzygies of algebraic varieties},
   journal={Invent. Math.},
   volume={190},
   date={2012},
   number={3},
   pages={603--646},
	note = {\arXiv{1103.0483v3}},
}

\bib{eis-syzygy}{book}{
author={Eisenbud, David},
   title={The geometry of syzygies},
   series={Graduate Texts in Mathematics},
   volume={229},
   note={A second course in commutative algebra and algebraic geometry},
   publisher={Springer-Verlag},
   place={New York},
   date={2005},
   pages={xvi+243},
}

\bib{eis-erman-cat}{article}{
AUTHOR = {Eisenbud, David},
AUTHOR = {Erman, Daniel},
TITLE = {Categorified duality in Boij--S\"{o}derberg theory and invariants of free complexes},
note = {\arXiv{1205.0449v2}},
year = {2012},
}

\bib{ees-filtering}{article}{
   author={Eisenbud, David},
   author={Erman, Daniel},
   author={Schreyer, Frank-Olaf},
   title={Filtering free resolutions},
   journal={Compos. Math.},
   volume={149},
   date={2013},
   number={5},
   pages={754--772},
note={\arXiv{1001.0585v3}},
}

\bib{eis-floy-schrey}{article}{
   author={Eisenbud, David},
   author={Fl{\o}ystad, Gunnar},
   author={Schreyer, Frank-Olaf},
   title={Sheaf cohomology and free resolutions over exterior algebras},
   journal={Trans. Amer. Math. Soc.},
   volume={355},
   date={2003},
   number={11},
   pages={4397--4426},
note={\arXiv{math/0104203v2}},
}

\bib{efw}{article}{
AUTHOR = {Eisenbud, David},
AUTHOR = {Fl{\o}ystad, Gunnar},
AUTHOR = {Weyman, Jerzy},
TITLE = {The existence of equivariant pure free resolutions},
JOURNAL = {Annales de l'institut Fourier},
volume = {61},
NUMBER = {3},
date = {2011},
pages={905--926},
note={\arXiv{0709.1529v5}},
}

\bib{eisenbud-peeva}{article}{
	   author={Eisenbud, David},
	      author={Peeva, Irena},
	title = {Matrix Factorizations for Complete Intersections and Minimal Free Resolutions},
	year = {2013},
	note = {\arXiv{1306.2615v4}},
	}

\bib{eis-schrey-chow}{article}{
   author={Eisenbud, David},
   author={Schreyer, Frank-Olaf},
   title={Resultants and Chow forms via exterior syzygies},
   journal={J. Amer. Math. Soc.},
   volume={16},
   date={2003},
   number={3},
   pages={537--579},
note={\arXiv{math/0111040v1}},
}

\bib{eis-schrey1}{article}{
   author={Eisenbud, David},
   author={Schreyer, Frank-Olaf},
   title={Betti numbers of graded modules and cohomology of vector bundles},
   journal={J. Amer. Math. Soc.},
   volume={22},
   date={2009},
   number={3},
   pages={859--888},
   issn={0894-0347},
note={\arXiv{0712.1843v4}},
}

\bib{eis-schrey2}{article}{
   author={Eisenbud, David},
   author={Schreyer, Frank-Olaf},
   title={Cohomology of coherent sheaves and series of supernatural bundles},
   journal={J. Eur. Math. Soc. (JEMS)},
   volume={12},
   date={2010},
   number={3},
   pages={703--722},
note={\arXiv{0902.1594v1}},
}

\bib{eis-schrey-icm}{inproceedings}{
      author={Eisenbud, David},
      author={Schreyer, Frank-Olaf},
       title={Betti numbers of syzygies and cohomology of coherent sheaves},
        date={2010},
   booktitle={Proceedings of the {I}nternational {C}ongress of
  {M}athematicians},
        note={Hyderabad, India, \arXiv{1102.3559v1}},
}

\bib{eis-schrey-abel}{article}{
   author={Eisenbud, David},
   author={Schreyer, Frank-Olaf},
   title={Boij--S\"oderberg theory},
   conference={
      title={Combinatorial aspects of commutative algebra and algebraic
      geometry},
   },
   book={
      series={Abel Symp.},
      volume={6},
      publisher={Springer},
      place={Berlin},
   },
   date={2011},
   pages={35--48},
}

\bib{eis-schrey-banks}{article}{
   author={Eisenbud, David},
   author={Schreyer, Frank-Olaf},
   title={The banks of the cohomology river},
   journal={Kyoto J. Math.},
   volume={53},
   date={2013},
   number={1},
   pages={131--144},
note={\arXiv{1109.4591v1}},
}

\bib{engstrom}{article}{
   author={Engstr{\"o}m, Alexander},
   title={Decompositions of Betti diagrams of powers of monomial ideals: a
   stability conjecture},
   conference={
      title={Combinatorial methods in topology and algebra},
   },
   book={
      series={Springer INdAM Ser.},
      volume={12},
      publisher={Springer, Cham},
   },
   date={2015},
note={\arXiv{1312.6981v1}},
pages={37--40},
}

\bib{engstrom-stamps}{article}{
   author={Engstr{\"o}m, Alexander},
   author={Stamps, Matthew T.},
   title={Betti diagrams from graphs},
   journal={Algebra Number Theory},
   volume={7},
   date={2013},
   number={7},
   pages={1725--1742},
note={\arXiv{1210.8069v3}},
}

\bib{erman-beh}{article}{
   author={Erman, Daniel},
   title={A special case of the Buchsbaum-Eisenbud-Horrocks rank conjecture},
   journal={Math. Res. Lett.},
   volume={17},
   date={2010},
   number={6},
   pages={1079--1089},
note={\arXiv{0902.0316v2}},
}

\bib{erman-high-degree-curve}{article}{
   author={Erman, Daniel},
   title={The Betti table of a high-degree curve is asymptotically pure},
   conference={
      title={Recent advances in algebraic geometry},
   },
   book={
      series={London Math. Soc. Lecture Note Ser.},
      volume={417},
      publisher={Cambridge Univ. Press, Cambridge},
   },
   date={2015},
   pages={200--206},
note={\arXiv{1308.4661v2}},
}

\bib{erman-sam-diagonals}{article}{
   author={Erman, Daniel},
author={Sam, Steven V},
   title={Supernatural analogues of Beilinson monads},
journal={Compos. Math.},
note={\arXiv{1506.07558v1}},
}

\bib{floystad-multigraded}{article}{
   author={Fl{\o}ystad, Gunnar},
   title={The linear space of Betti diagrams of multigraded Artinian
   modules},
   journal={Math. Res. Lett.},
   volume={17},
   date={2010},
   number={5},
   pages={943--958},
note={\arXiv{1001.3235v2}},
}

\bib{floystad-expository}{article}{
	AUTHOR = {Fl{\o}ystad, Gunnar},
     TITLE = {Boij--S\"oderberg theory: Introduction and survey},
     NOTE = {\arXiv{1106.0381v2}},
booktitle={Progress in Commutative Algebra 1, Combinatorics and homology},
series={Proceedings in mathematics},
publisher={du Gruyter}, 
pages={1-54},
YEAR = {2012},
     }
     
     \bib{floystad-zipping}{article}{
   author={Fl{\o}ystad, Gunnar},
   title={Zipping Tate resolutions and exterior coalgebras},
   journal={J. Algebra},
   volume={437},
   date={2015},
   pages={249--307},
note={\arXiv{1212.3675v3}},
}
		
\bib{floystad-triplets}{article}{
   author={Fl{\o}ystad, Gunnar},
   title={Triplets of pure free squarefree complexes},
   journal={J. Commut. Algebra},
   volume={5},
   date={2013},
   number={1},
   pages={101--139},
note={\arXiv{1207.2071v3}},
}

\bib{survey2}{article}{
title={Three themes of syzygies},
author={Fl\o ystad, Gunnar},
author={McCullough, Jason},
author={Peeva, Irena},
journal={Bull. Amer. Math. Soc.},
volume={53},
date={2016}, 
pages={415--435},
}

\bib{gheorghita-sam}{article}{
	author = {Iulia Gheorghita},
	author = {Steven V Sam},
	title = {The cone of Betti tables over three non-collinear points in the plane},	
journal={J. Commut. Alg.},
note={\arXiv{1501.00207v1}},
}

\bib{gibbons}{article}{
   author={Gibbons, Courtney},
   author={Jeffries, Jack},
   author={Mayes, Sarah},
   author={Raicu, Claudiu},
   author={Stone, Branden},
   author={White, Bryan},
   title={Non-simplicial decompositions of Betti diagrams of complete
   intersections},
   journal={J. Commut. Algebra},
   volume={7},
   date={2015},
   number={2},
   pages={189--206},
note={\arXiv{1301.3441v3}},
}

\bib{M2}{misc}{
    label={M2},
    author={Grayson, Daniel~R.},
    author={Stillman, Michael~E.},
    title = {Macaulay 2, a software system for research
	    in algebraic geometry},
    note = {Available at \url{http://www.math.uiuc.edu/Macaulay2/}},
}

\bib{hartshorne-vector}{article}{
    AUTHOR = {Hartshorne, Robin},
     TITLE = {Algebraic vector bundles on projective spaces: a problem list},
   JOURNAL = {Topology},
  FJOURNAL = {Topology. An International Journal of Mathematics},
    VOLUME = {18},
      YEAR = {1979},
    NUMBER = {2},
     PAGES = {117--128},
}

\bib{herzog-srinivasan}{article}{
   author={Herzog, J{\"u}rgen},
   author={Srinivasan, Hema},
   title={Bounds for multiplicities},
   journal={Trans. Amer. Math. Soc.},
   volume={350},
   date={1998},
   number={7},
   pages={2879--2902},
}

\bib{kummini-sam}{article}{
	author = {Manoj Kummini},
	author = {Steven V Sam},
	title = {The cone of Betti tables over a rational normal curve},
booktitle={Commutative Algebra and Noncommutative Algebraic Geometry}, 
pages={251--264}, 
series={Math. Sci. Res. Inst. Publ.},
volume={68}, 
publisher={Cambridge Univ. Press, Cambridge},
date={2015},
note={\arXiv{1301.7005v2}},
}

\bib{mayes-tang}{article} {
author={Mayes-Tang, Sarah},
title={Stabilization of Boij-S\"oderberg decompositions of ideal powers},
note={\arXiv{1509.08544v1}},
}

\bib{mccullough-concavity}{article}{
   author={McCullough, Jason},
   title={A polynomial bound on the regularity of an ideal in terms of half
   of the syzygies},
   journal={Math. Res. Lett.},
   volume={19},
   date={2012},
   number={3},
   pages={555--565},
note={\arXiv{1112.0058v1}},
}

\bib{mccullough-seceleanu}{article}{
   author={McCullough, Jason},
   author={Seceleanu, Alexandra},
   title={Bounding projective dimension},
   conference={
      title={Commutative algebra},
   },
   book={
      publisher={Springer, New York},
   },
   date={2013},
   pages={551--576},
}

\bib{nagel-sturgeon}{article}{
   author={Nagel, Uwe},
   author={Sturgeon, Stephen},
   title={Combinatorial interpretations of some Boij-S\"oderberg
   decompositions},
   journal={J. Algebra},
   volume={381},
   date={2013},
   pages={54--72},
note={\arXiv{1203.6515v1}},
}

\bib{peeva-book}{book}{
   author={Peeva, Irena},
   title={Graded syzygies},
   series={Algebra and Applications},
   volume={14},
   publisher={Springer-Verlag London Ltd.},
   place={London},
   date={2011},
   pages={xii+302},
   isbn={978-0-85729-176-9},
}

\bib{peeva-stillman}{article}{
   author={Peeva, Irena},
   author={Stillman, Mike},
   title={Open problems on syzygies and Hilbert functions},
   journal={J. Commut. Algebra},
   volume={1},
   date={2009},
   number={1},
   pages={159--195},
}

\bib{sam}{article} {
   author={Sam, Steven V},
   title={Homology of analogues of Heisenberg Lie algebras},
   journal={Math. Res. Lett.},
   volume={22},
   date={2015},
   number={4},
   pages={1223--1241},
note={\arXiv{1307.1901v2}},
}

\bib{sam-weyman}{article}{
   author={Sam, Steven V},
   author={Weyman, Jerzy},
   title={Pieri resolutions for classical groups},
   journal={J. Algebra},
   volume={329},
   date={2011},
   pages={222--259},
note={\arXiv{0907.4505v5}},
}
\end{small}

\end{biblist}
\end{bibdiv}

\end{document}